\documentclass[reqno]{amsart}
\usepackage{amsmath,amsthm,amssymb}
\usepackage{latexsym}
\usepackage{eucal}

\usepackage{amsfonts}
\usepackage{mathtools}
\usepackage [english]{babel}


\let\Re=\undefined
\DeclareMathOperator{\Re}{Re}
\let\Im=\undefined
\DeclareMathOperator{\Im}{Im}

\def\~{\widetilde}

\newtheorem{theorem}{Theorem}[section]
\newtheorem{lemma}{Lemma}[section]

\newtheorem{proposition}{Proposition}[section]

\let\Re=\undefined
\DeclareMathOperator{\Re}{Re}
\let\Im=\undefined
\DeclareMathOperator{\Im}{Im}

\def\~{\widetilde}

\begin{document}
\title{Randomized Verblunsky Parameters in Steklov's Problem}
\author{Keith Rush}
\address{
\begin{flushleft}
Milwaukee Brewers\\  Strategy and Analytics\\
1 Brewers Way, Milwaukee, WI, 53214, USA\\  keith.rush@brewers.com\\
\end{flushleft}
}
\maketitle
\begin{abstract}
	We consider randomized Verblunsky parameters for orthogonal polynomials on the unit circle as they relate to the problem of Steklov, bounding the polynomials' uniform norm independent of $n$. 
\end{abstract}
\section{Steklov Problems in Orthogonal Polynomials}
Let $\mu$ be a probability measure on $\mathbb{T} = \{z \in \mathbb{C} : |z|=1\}$. Define $\{\Phi_n\}$ to be the unique polynomials satisfying
$$\int_{\mathbb{T}} \Phi_n(z) z^{-j} d\mu = 0, \quad 0 \leq j < n, \quad \quad \quad \text{coeff}(\Phi_n, n)=1.$$
We call these $\{\Phi_n\}$ the monic orthogonal polynomials.

Denote by $\phi_n$ the orthonormal polynomials,
$$\phi_n = \frac{\Phi_n}{\|\Phi_n\|_{L^2(d\mu)}}.$$

A central question about orthogonal polynomials concerns their asymptotic behavior as $n \to \infty$. Szeg\H{o} proved $L^2(d\mu)$ convergence of the orthonormal polynomials to a particular outer function in $H^2(\mathbb{D})$, the Szeg\H{o} function, given Szeg\H{o}'s condition
$$\int_{\mathbb{T}} \log w(\theta) d\theta > -\infty$$
for $w(\theta)d\theta = d\mu_{ac}$. Under the same condition, the orthonormal and monic orthogonal polynomials are uniformly comparable in $n$. 

There has been much work on similar conditions. V.A. Steklov conjectured in \cite{steklov:ums21} that if $\mu \in S_\delta$,
$$S_\delta = \left\{\mu \in \mathcal{M}(\mathbb{T}), \quad \int_{\mathbb{T}} d\mu =1,\quad d\mu' \geq \delta \text{ a.e.}\right\},$$
the orthonormal polynomials $\phi_n(z; d\mu)$ would obey the bound
$$\|\phi_n\|_{L^\infty(\mathbb{T})} \leq C_\delta.$$
This conjecture was disproved by E.A. Rahmanov in \cite{rahmanov:osc80}, who constructed polynomials with logarithmic growth of the uniform norm in $n$ via an algebraic identity.

Rahmanov's result sparked interest in the rate of polynomial growth as measured by the uniform norm. A simple argument (see \cite{geronimus:poc60} for example) shows
$$\|\phi_n\|_{L^\infty(\mathbb{T})} = o(\sqrt{n}).$$
Rahmanov nearly matched this growth in \cite{rahmanov:ego82} where he showed, for 
$$M_{n, \delta} \coloneqq \sup_{\mu \in S_\delta} \|\phi_n(z; d\mu)\|_{L^\infty(\mathbb{T})}$$
that
$$\left(\frac{n}{\ln^3n}\right)^{1/2} <_\delta M_{n, \delta} <_\delta n^{1/2}.$$
This was the sharpest quantification of $M_{n, \delta}$ until \cite{aptekarev:ops14}, where Aptekarev, Denisov and Tulyakov proved 
$$M_{n, \delta} \sim_\delta n^{1/2}.$$

Thus the Steklov condition is insufficient to break the barrier of $n^{1/2}$. We are then led to the natural question: are there any related conditions for which something stronger can be proved?

This question was answered in the affirmative in \cite{denisov:tgp16, denisov:opc16}, which imposed $w, w^{-1} \in L^\infty(\mathbb{T})$ and $w, w^{-1} \in \text{BMO}(\mathbb{T})$, respectively, although both consider only polynomials orthogonal with respect to absolutely continuous measures. Both conditions are sufficient to break the $n^{1/2}$ barrier, and lower bounds were established in \cite{denisov:tgp16} showing that its results are sharp in some regimes. Ambroladze showed in \cite{ambroladze:opr92} that polynomials orthogonal with respect to a continuous weight function may still grow in uniform norm, and the author in his thesis proved upper bounds in terms of the weight function's modulus of continuity which are also sharp in some cases.

Thus the conjecture of Steklov turned out to be quite incorrect, and in fact much stronger conditions are still insufficient to bound the polynomials' uniform norm. On the other hand, the conjecture remained open for over 50 years, and survived many attacks (see \cite{ geronimus:ocv62, geronimus:rbo63, geronimus:seo77, golinskii:tvs74} as well as the survey \cite{suetin:vsp77}). Empirically, counterexamples seem to be sparse. This suggests the probabilistic Steklov question: if we take a random measure, what can we say?

Consider a fundamental system in orthogonal polynomials on the unit circle (OPUC), the so-called Szeg\H{o} recursion:
\begin{equation}
\label{recur}
\Phi_{n+1}(z) = z\Phi_n(z) - \overline{\alpha_n}\Phi_n^*(z)$$
$$
\Phi_{n+1}^*(z) = \Phi_n^*(z) - \alpha_nz\Phi_n(z)
\end{equation}
for $\Phi_n^*(z) = z^n\overline{\Phi_n(z)}, z \in \mathbb{T}$, some $\alpha_n \in \mathbb{D}$. These $\alpha_n$ are one domain in which the measure can be parameterized. Indeed, sequences $\{\alpha_n\} \in \mathbb{D}^\infty$ correspond bijectively to probability measures on the circle with infinitely many points of support (Verblunsky's Theorem \cite{verblunsky:oph36}). 

\eqref{recur} suggests a natural randomization in OPUC. Let $\{a_n\}_{n=0}^\infty$ be a fixed (real) sequence, with $|a_n|<1$ for each $n$.
Let $\{\omega_n\}_{n=0}^\infty$ be an independent sequence of complex-valued random variables bounded by $1$ in absolute value and with expectation 0. For simplicity we also assume rotational symmetry of the $\omega_n$.

Let $\alpha_n = a_n\omega_n$, so that $\{\alpha_n\}_{n=0}^\infty$ is a sequence of independent random variables with the same decay properties as $\{a_n\}$. Denote by $dS$ the probability measure thus defined on $\mathbb{D}^\infty$, under the $\sigma$-algebra generated by $\{\alpha_n\}$.

Let 
$$\mathcal{F}_{n} = \sigma(\alpha_0, \dots, \alpha_{n-1})$$
define a filtration on $\mathbb{D}^\infty$. Since the $\alpha_j$ are independent random variables, we have
$$\alpha_j \perp \mathcal{F}_n \text{ for }j \geq n, \quad \quad \Phi_n(z), \Phi_n^*(z) \in \mathcal{F}_n.$$

Therefore by \eqref{recur}
\begin{equation}
\label{MG}
\mathbb{E}[\Phi_{j}^*(z) | \mathcal{F}_n] = \Phi_n^*(z) \quad \quad j \geq n
\end{equation}
and the polynomials $\Phi_n^*$ have martingale structure.

We expect this structure will enable us to provide some quantitative control on the polynomials, and the relevant question becomes:

How much decay must we impose on $\{a_n\}$ so that the associated orthonormal polynomials $\phi_n^*$ remain bounded in $L^{\infty}(\mathbb{T})$ with high probability?

A well-known analogy relates orthogonal polynomials and partial sums of Fourier series. By considering a particular change of variables (Pr\"{u}fer variables, see \cite{kiselev:mpe98}), the polynomials can be thought of as a nonlinear analogue of these partial sums, where the Verblunsky parameters $\{\alpha_n\}$ play the role of the Fourier coefficients.

Thus our question is analogous to those answered by Salem and Zygmund in the classical paper \cite{salem:spt54}. We expect similar results to be provable. In the main result of this paper, we prove a statement similar to one of Salem and Zygmund.

\begin{theorem}
\label{polynomial-SZ}
Let $\{a_j\} \in (-1, 1)^\infty$ be fixed. Let
$$R_k = \sum_{n=k}^\infty a_n^2$$
and assume 
$$\sum_n \frac{\sqrt{R_n}}{n\sqrt{\log n}} < \infty.$$

Let
$\alpha_n = a_n\omega_n$
with $\{\omega_n\}$ a sequence of independent complex random variables, bounded by 1 in absolute value and rotationally symmetric in $\mathbb{C}$.
Then with probability 1 there exists a random constant $C$ such that
$$\sup_n\|\Phi_n^*\|_\infty \leq C.$$
\end{theorem}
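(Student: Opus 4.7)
The plan is to combine the martingale structure \eqref{MG} at each fixed $z\in\mathbb{T}$ with a dyadic chaining argument on $\mathbb{T}$, in the spirit of Salem--Zygmund for random Fourier series. From \eqref{recur} the martingale increments at a point are $D_k(z)=-\alpha_k z\,\Phi_k(z)$, and since $|\Phi_k(z)|=|\Phi_k^*(z)|$ for $z\in\mathbb{T}$, one has $|D_k(z)|=|\alpha_k||\Phi_k^*(z)|$. Rotational symmetry of $\omega_k$ together with $|\alpha_k|\le a_k$ yields the conditional Hoeffding estimate
\[
\mathbb{E}\bigl[\,e^{\lambda\,\Re(\bar\zeta D_k(z))}\,\big|\,\mathcal F_k\bigr]\le \exp\!\left(\tfrac{\lambda^2}{4}\,a_k^{\,2}\,|\Phi_k^*(z)|^2\right)\qquad(|\zeta|=1),
\]
so that on any event where $|\Phi_k^*(z)|\le L$ for $m\le k<n$, the increment $\Phi_n^*(z)-\Phi_m^*(z)$ is subgaussian with variance proxy at most $L^2 R_m$.

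The first step is a single-point estimate. From the recursion and independence, $\mathbb{E}|\Phi_n^*(z)|^2\le\prod_{k<n}(1+a_k^{\,2})\le e^{R_0}$, so Doob's $L^2$ inequality gives $\mathbb{E}\sup_n|\Phi_n^*(z)|^2\le 4e^{R_0}$ uniformly in $z\in\mathbb{T}$. A stopping-time bootstrap at levels $\tau_A=\inf\{k:|\Phi_k^*(z)|>A\}$, combined with the Hoeffding estimate applied to the stopped martingale, should convert this into a conditional subgaussian tail of the form
\[
\mathbb{P}\bigl(\sup_{n>m}|\Phi_n^*(z)-\Phi_m^*(z)|>t\ \big|\ \sup_{k\le m}|\Phi_k^*(z)|\le L\bigr)\lesssim e^{-ct^2/(L^2 R_m)}.
\]

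The second step is the chaining on $\mathbb{T}$. Since $\Phi_N^*$ has degree $N$, Bernstein's inequality gives $\|\Phi_N^*\|_\infty\lesssim \max_{z\in T_N}|\Phi_N^*(z)|$ on any mesh $T_N\subset\mathbb{T}$ with $|T_N|\sim N$. Applied at dyadic times $N_k=2^k$ together with a union bound over $T_{N_k}$ and the single-point tail above, one obtains telescoping estimates $\|\Phi_{N_{k+1}}^*-\Phi_{N_k}^*\|_\infty\lesssim L\sqrt{R_{N_k}\log N_{k+1}}$ with summable exceptional probability, where $L$ denotes a running $L^\infty$ bound. A finer multi-scale chaining, balancing mesh size against the variance proxy $R_{2^k}$ at each scale and using dyadic comparison to $\sum_k\sqrt{R_{2^k}/k}$, reduces the Borel--Cantelli closure to the hypothesis $\sum_n\sqrt{R_n}/(n\sqrt{\log n})<\infty$.

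The principal obstacle is the self-referential quadratic variation $\sum_k a_k^{\,2}|\omega_k|^2|\Phi_k^*(z)|^2$: the martingale is not subgaussian with any deterministic variance proxy, since the increment size depends on the very quantity being bounded. The stopping-time truncation of the single-point step must be carried out in tandem with the chaining so that the running level $L$ can be absorbed into the Borel--Cantelli sums; the tail decay of $R_n$ imposed by the hypothesis is precisely what makes this simultaneous closure possible.
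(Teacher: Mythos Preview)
Your plan has the right skeleton (pointwise subgaussian tail, union bound over a mesh, Bernstein, Borel--Cantelli), but it misses the idea that makes the argument close, and the scale you choose does not match the hypothesis.

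The paper does \emph{not} run the martingale $\Phi_n^*(z)$ directly. It passes to $\log\Phi_n^*$ via \eqref{linear-MG-structure}: since $\log(1-\alpha_j e^{i\gamma_j})=-\alpha_j e^{i\gamma_j}+O(|\alpha_j|^2)$ and $\{|\alpha_j|^2\}$ is summable, everything reduces to controlling the linear sum $\sum_{j}\alpha_j e^{i\gamma_j(\theta)}$. The increments of that sum have modulus $|a_j|$, with \emph{no} factor of $|\Phi_j^*|$; the self-referential quadratic variation you flag as ``the principal obstacle'' simply disappears. Moreover, rotational symmetry of $\omega_j$ implies that for each fixed $\theta$ the law of $\sum\alpha_je^{i\gamma_j(\theta)}$ coincides with that of the independent sum $\sum\alpha_j$ (Lemma~\ref{no_more_dependence}), so one gets a clean subgaussian bound with variance proxy $R_{G(k)}$ and can apply Doob to the exponential.

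Second, your dyadic scale $N_k=2^k$ does not recover the hypothesis. With $\sim 2^{k+1}$ mesh points and variance proxy $R_{2^k}$, the union bound forces increments of order $\sqrt{k\,R_{2^k}}$, whose summability is strictly stronger than $\sum_k\sqrt{R_{2^k}/k}<\infty$ (the condensed form of the hypothesis). The paper instead takes $G(k)=2^{2^k}$: then $\log G(k+1)\sim 2^{k}$ and $\lambda_k\sim\sqrt{2^k R_{2^{2^k}}}$, and a second Cauchy condensation shows $\sum_k\lambda_k<\infty$ is exactly the assumption. Your ``finer multi-scale chaining'' would have to amount to this doubly-dyadic batching; as written, the proposal does not supply it, and the stopping-time bootstrap you sketch (even if it can be made to work multiplicatively) is rendered unnecessary once one takes logarithms.
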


{\bf Remark 1.} Since our conditions on $\{a_n\}$ imply in particular $\{\alpha_n\} \in \ell^2\left(\mathbb{Z}_+\right)$ and therefore Szeg\H{o}'s condition is satisfied, the orthonormal and monic orthogonal polynomials are uniformly comparable in $n$. We use the monic polynomials due to the simplicity of the recurrence \eqref{recur} under this normalization.

{\bf Remark 2.} In Salem and Zygmund \cite{salem:spt54}, the following is proved.

\begin{theorem} [Salem-Zygmund \cite{salem:spt54}, Theorem 5.1.5]
Let $R_n=\sum_{m=n+1}^\infty a_m^2$. If
$$\sum_n \frac{\sqrt{R_n}}{n\sqrt{\log n}} <\infty$$
then the series 
$$\sum_{m=1}^\infty a_m\psi_m(t)\cos mx$$
represents a continuous function for almost every value of $t$, where $\{\psi_n\}$ is the Rademacher system.
\end{theorem}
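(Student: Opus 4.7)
The plan is a block decomposition of the random series with breakpoints adapted to the decay of the tails $R_n$, combined with a classical Bernstein--Hoeffding argument inside each block and a Borel--Cantelli conclusion. The hypothesis forces $\sum a_m^2 = R_0 < \infty$ (else every $R_n$ would be infinite) and $R_n \to 0$, so I can choose integers $N_0 < N_1 < \cdots$ with $R_{N_k} \in [2^{-k-1}, 2^{-k}]$ for all large $k$, and set
$$\Delta_k(x,t) = \sum_{N_k < m \le N_{k+1}} a_m\,\psi_m(t)\,\cos mx.$$
The goal is to show $\sum_k \|\Delta_k\|_{L^\infty(\mathbb{T})} < \infty$ almost surely in $t$; once this holds, the partial sums along $\{N_k\}$ form a uniformly Cauchy sequence, so the series converges uniformly to a continuous limit.

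For each fixed $x$, $\Delta_k(x,\cdot)$ is a Rademacher sum whose $\ell^2$ coefficient norm is bounded by $\sqrt{V_k}$, where $V_k := R_{N_k} - R_{N_{k+1}} \le 2^{-k}$, so Hoeffding's inequality gives $\mathbb{P}(|\Delta_k(x)| > \lambda) \le 2\exp(-\lambda^2 2^{k-1})$. Each $\Delta_k(\cdot,t)$ is a trigonometric polynomial of degree $N_{k+1}$, so Bernstein's inequality $\|\Delta_k'\|_\infty \le N_{k+1}\|\Delta_k\|_\infty$ reduces control of $\|\Delta_k\|_\infty$ to the maximum over an evenly spaced grid of $O(N_{k+1})$ points. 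A union bound then yields
$$\mathbb{P}\bigl(\|\Delta_k\|_\infty > \lambda\bigr) \lesssim N_{k+1}\,\exp\bigl(-c\lambda^2 2^k\bigr),$$
and choosing $\lambda_k = C\cdot 2^{-k/2}\sqrt{\log N_{k+1}}$ with $C$ large makes the right-hand side summable in $k$. Borel--Cantelli then gives $\|\Delta_k\|_\infty \le \lambda_k$ eventually almost surely.

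The decisive step is to deduce $\sum_k\lambda_k = C\sum_k 2^{-k/2}\sqrt{\log N_{k+1}}<\infty$ from the hypothesis. The adaptive choice of $N_k$ is essential: on $[N_k,N_{k+1}]$ we have $\sqrt{R(x)}\asymp 2^{-k/2}$ and
$$\int_{N_k}^{N_{k+1}}\frac{dx}{x\sqrt{\log x}} = 2\bigl(\sqrt{\log N_{k+1}}-\sqrt{\log N_k}\bigr),$$
so, writing $b_k = \sqrt{\log N_k}$, the hypothesis is equivalent (up to constants) to $\sum_k 2^{-k/2}(b_{k+1}-b_k)<\infty$. An Abel summation, exploiting the geometric decay of $2^{-k/2}$, converts this into $\sum_k 2^{-k/2}\,b_{k+1}<\infty$, which is exactly the bound on $\sum_k\lambda_k$.

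The main obstacle is precisely this matching of block sizes to the hypothesis: the naive choice $N_k = 2^k$ would require the strictly stronger condition $\sum_k\sqrt{k\,V_k}<\infty$ and fail in the threshold regime $R_n\sim(\log n)^{-\alpha}$ with $1<\alpha\le 2$. Once uniform convergence along $\{N_k\}$ is established, intermediate partial sums $S_n$ with $N_k<n\le N_{k+1}$ are controlled by the same Bernstein--Hoeffding bound applied to the truncated block $\sum_{N_k<m\le n}a_m\psi_m\cos mx$, so $f(\cdot,t)$ is continuous almost surely in $t$.
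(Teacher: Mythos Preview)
Your argument is correct. Note that the paper does not itself prove this Salem--Zygmund theorem---it is quoted as background---but its proof of the OPUC analogue (Theorem~\ref{polynomial-SZ}) follows the same sub-Gaussian/Bernstein/Borel--Cantelli template with one genuine difference in the block structure. You pick the breakpoints \emph{adaptively}, taking $N_k=\min\{n:R_n\le 2^{-k}\}$ so that each block has variance at most $2^{-k}$, and then deduce $\sum_k 2^{-k/2}\sqrt{\log N_{k+1}}<\infty$ from the hypothesis by an Abel summation exploiting the geometric decay of $2^{-k/2}$. The paper instead uses fixed doubly-exponential breakpoints $G(k)=2^{2^k}$, takes $\lambda_k\asymp\sqrt{2^k R_{G(k)}}$, and obtains summability by applying Cauchy condensation twice to $\sum_n \sqrt{R_n}/(n\sqrt{\log n})$, using that $R_n$ is automatically monotone. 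Your warning about the failure of the single-dyadic choice $N_k=2^k$ in the regime $R_n\sim(\log n)^{-\alpha}$, $1<\alpha\le 2$, is correct, but the doubly-dyadic lattice sidesteps that obstacle just as your adaptive lattice does. For the maximal control over intermediate partial sums within a block the paper invokes Doob's submartingale inequality; in your independent-summand setting L\'evy's reflection inequality plays the same role with no loss in the exponent, so your final paragraph goes through cleanly.
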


Our result is analogous to theirs. Additionally, they showed that their condition was the best possible of its kind. We are able to prove the same.

\begin{theorem}
	\label{sharpness}
	Let $\nu(n) \to \infty$ monotonically with $n$. Then there is a sequence $\{a_j\} \in (-1, 1)^\infty$ and $\{\omega_n\}$ a sequence of independent, rotationally symmetric $\mathbb{D}$-valued random variables so that with probability 1 there is $\theta^* \in [0, 2\pi)$ satisfying $$\sup_n|\Phi_n(e^{i\theta^*})|=\infty$$
	and, moreover, for $R_n$ as in Theorem \ref{polynomial-SZ},
 $$\sum_n \frac{\sqrt{R_n}}{\nu(n)n\sqrt{\log n}} <\infty.$$

\end{theorem}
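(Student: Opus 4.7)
The plan is to mirror the classical Salem--Zygmund sharpness construction, exploiting the fact that sparse Verblunsky parameters linearize the Szeg\H{o} recursion. I would fix a super-lacunary sequence $n_k$ (for instance $n_k = 2^{2^k}$) and take $a_n = 0$ unless $n = n_k$, with $a_{n_k} = b_k > 0$ to be tuned. By \eqref{recur}, $\alpha_n = 0$ forces $\Phi_{n+1}(z) = z\Phi_n(z)$ and $\Phi_{n+1}^*(z) = \Phi_n^*(z)$, so the dynamics reduce to a two-step map on the subsequence $\xi_k := z^{-n_k}\Phi_{n_k}(z)$, namely
$$\xi_k = \xi_{k-1}\bigl(1 - b_{k-1}\bar\omega_{k-1}\,z^{-n_{k-1}-1}\,e^{-2i\arg\xi_{k-1}}\bigr), \qquad z \in \mathbb{T}.$$
Iterating and expanding $\log|1-w|^2 = -2\Re w + O(|w|^2)$, one obtains
$$\log|\Phi_{n_k}(e^{i\theta})|^2 = -2\sum_{j=0}^{k-1} b_j\,\Re\bigl(\tilde\omega_j\,e^{-i(n_j+1)\theta}\bigr) + O\Bigl(\sum_{j<k}b_j^2\Bigr),$$
where $\tilde\omega_j = \bar\omega_j e^{-2i\arg\xi_j}$ and the Taylor remainder is uniform in $\theta$ once $\sum b_j^2 < \infty$.

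Because $\omega_j\perp\mathcal{F}_{n_j}$ and $\arg\xi_j\in\mathcal{F}_{n_j}$, the rotational symmetry of $\omega_j$ gives that each $\tilde\omega_j$ is rotationally symmetric conditionally on $\mathcal{F}_{n_j}$; in particular, at each fixed $\theta$, the leading sum has the same single-coordinate distribution as the classical lacunary random Fourier series $\sum b_j\,\bar\omega_j e^{-i(n_j+1)\theta}$. I would then carry out the parameter choice so that: (i) $\sum_k b_k^2 < \infty$ (giving Szeg\H{o} class and absorbing the error term), (ii) the weighted condition $\sum_n \sqrt{R_n}/(\nu(n)\,n\sqrt{\log n}) < \infty$ holds --- using that $R_n \equiv S_k := \sum_{j\ge k}b_j^2$ is constant on each plateau $(n_{k-1},n_k]$ and $\sum_{n_{k-1}<n\le n_k}1/(n\sqrt{\log n})\sim 2(\sqrt{\log n_k}-\sqrt{\log n_{k-1}})$, so that the super-lacunarity of $n_k$ and the monotonicity of $\nu$ leave ample room to match any prescribed $\nu$ --- and (iii) the sharpness direction of Salem--Zygmund's Theorem 5.1.5 produces a.s.\ divergence of the partial sums at some random $\theta^*$.

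The main obstacle is that $\tilde\omega_j$ depends on $\theta$, so the classical Salem--Zygmund sharpness theorem does not apply verbatim: the ``random coefficients'' are now a random field in $\theta$. I would get around this by combining a Paley--Zygmund second-moment lower bound on $\max_k\bigl|\sum_{j<k} b_j \tilde\omega_j e^{-i(n_j+1)\theta}\bigr|$ at each fixed $\theta$ (which goes through thanks to the conditional rotational symmetry of $\tilde\omega_j$) with a chaining argument over a dyadic net in $\theta$ whose resolution is dictated by the lacunarity of $n_k$; this mirrors the original Salem--Zygmund proof and handles the fact that different $\theta$ contribute different phase twists. Once positive probability of the event $\{\exists\,\theta^* : \sup_n|\Phi_n(e^{i\theta^*})|=\infty\}$ is established, the Kolmogorov zero--one law (applied to the tail $\sigma$-algebra of the independent $\omega_j$'s) upgrades it to probability one. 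The identity $|\Phi_n|=|\Phi_n^*|$ on $\mathbb{T}$ and the Szeg\H{o} comparability of monic and orthonormal polynomials (cf.\ Remark 1) then return the conclusion to the form claimed in Theorem \ref{sharpness}.
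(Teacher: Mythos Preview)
Your proposal has two genuine gaps.

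First, the spacing $n_k=2^{2^k}$ is incompatible with the conclusion. On the plateau $(n_{k-1},n_k]$ you correctly compute $\sum 1/(n\sqrt{\log n})\sim c\,2^{k/2}$, so for a slowly growing $\nu$ (say $\nu(n)=\log\log n$, giving $\nu(n_{k-1})\asymp k$) the weighted condition forces $\sum_k \sqrt{S_k}\,2^{k/2}/k<\infty$, hence $S_k=o(2^{-k})$. But then $b_k=O(2^{-k/2})$ and $\sum_k b_k<\infty$, and your own Taylor expansion gives the deterministic bound $|\log|\Phi_n^*||\le \sum_j|\alpha_j|+O(\sum_j|\alpha_j|^2)<\infty$, so the polynomials are uniformly bounded. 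Super-lacunarity leaves no ``ample room''; the paper instead uses geometric spacing $n_k=T^k$ with $T\ge 12$ and $b_k=\epsilon/(k\psi(k))$, for which Cauchy condensation reduces the weighted condition to $\sum_k 1/(k\psi(k)\beta(k))<\infty$, compatible with $\sum_k b_k=\infty$ for suitable $\psi$.

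Second, the divergence mechanism is missing. At each fixed $\theta$ your symmetry observation shows $\sum_{j<k} b_j\tilde\omega_j e^{-i(n_j+1)\theta}$ has the law of $\sum b_j\omega_j$, which converges a.s.\ since $\sum b_j^2<\infty$; a Paley--Zygmund bound on its running maximum therefore yields only a finite constant. Chaining produces upper bounds on suprema, not lower bounds. What is needed is a $\theta$ at which the terms align so that the real parts accumulate like $\sum b_j$, and the $\theta$-dependence of $\arg\xi_j$ that you flag is exactly what blocks a naive alignment. The paper handles this by contradiction: assuming $\sup_\theta|\sum\alpha_n e^{i\gamma_n(\theta)}|$ is a.s.\ finite, and using the free parameter $\epsilon$, one forces $|\gamma_j(\theta)-\gamma_j(0)-(j{+}1)\theta|\le \pi/12$ on a set of positive probability. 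Under this approximate linearity a \emph{deterministic} nested-interval lemma (Lemma~\ref{blowup}) produces, for every such realization, a point $\theta^*$ with $\Re(\alpha_{T^k}e^{i\gamma_{T^k}(\theta^*)})\ge \tfrac12|\alpha_{T^k}|$ for all $k$, whence divergence since $\sum|\alpha_{T^k}|=\infty$; Kolmogorov's $0$--$1$ law then upgrades to probability one. That lemma and the contradiction device feeding it are the heart of the proof and have no counterpart in your sketch.
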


These results are similar to those found in \cite{chhaibi:omc16}, which also considers randomized Verblunsky parameters in OPUC, but from a different perspective and answering different questions. I am indebted to \cite{chhaibi:omc16} for the idea of  working on the linear level, which allowed the results in this paper to be sharp in the sense noted, and simplified the arguments.

A few remarks on notation and definitions. 

We will use $f \lesssim g $ to denote the existence of a universal constant $C$ so that $f(x) \leq Cg(x)$ for all values of the argument.  $f(\delta,x) <_\delta g(\delta,x)$ will denote the existence of a $\delta$-dependent constant $C_\delta$ so that $f(\delta, x) \leq C_\delta g(\delta,x)$ for all $\delta>0$. $\alpha \ll \beta$ will indicate the existence of a sufficiently large constant $A$ so that $\frac{\beta}{\alpha} \geq A$. $\gg$ is defined similarly.

Infinite sequences will appear often in this paper so we will repeatedly suppress their indices. Unless otherwise stated, all sequences will be assumed to run from $0$ to $\infty$.

$\Omega$ will always refer to $\mathbb{D}^\infty$ considered as the state space for the random sequence $\{\alpha_n\}$. The probability measure on $\mathbb{D}^\infty$ is dependent upon choices of $\{a_n\}$ and $\{\omega_n\}$ as above. $\omega\in \Omega$ refers to an element of this state space, i.e. a particular sequence of Verblunsky parameters $\{\alpha_n\}$, with probability inherited by the setup of $\{a_n\}$ and $\{\omega_n\}$.

 Since $|\Phi_n(z)| = |\Phi_n^*(z)|$ for $z \in \mathbb{T}$, the quantity $\frac{z\Phi_n}{\Phi_n^*}$ is a phase if $z \in \mathbb{T}$. Furthermore, this quantity is continuous in $\theta$ for $z=e^{i\theta}$ since $\Phi_n^*$ does not vanish in $\overline{\mathbb{D}}$. We will denote
 $$e^{i\gamma_n} \coloneqq \frac{z\Phi_n}{\Phi_n^*}.$$
 This $\gamma_n$ depends on $\theta$ as well as $\{\alpha_j\}_{j=0}^{n-1}$. The dependence will be at times implicit but clear from context. When appropriate we will call attention to the dependence of $\gamma_n$ on $\theta$ by writing $\gamma_n(\theta)$. Note that $\gamma_n$ is measurable with respect to $\mathcal{F}_{n}$ due to the similar measurability of $\Phi_n$, $\Phi_n^*$. Note further that $e^{i\gamma_n(\theta)}$ may wind.

Other notations will be needed and defined throughout the text.

\section{Basic lemmas, upper bound}

We use the Markovian nature of the sequence $\{\Phi_n^*\}$ to decouple in a sparse manner and its martingale nature to control the transfer process uniformly. In this section we collect the lemmas which will be needed to prove Theorem \ref{polynomial-SZ} above. 

Let $\{\alpha_n\}_{n=0}^\infty$ be randomized as in the statement of the Theorem, and the filtration $\{\mathcal{F}_k\}_{k=0}^\infty$ be as defined in the previous section. The polynomial recursion says
$$\Phi_{n+1}^*(z) = \Phi_n^*(z)\left(1 - \frac{\alpha_nz\Phi_n}{\Phi_n^*}\right).$$
Therefore
\begin{equation}\label{product-MG-structure}
\Phi_{n+1}^*(z) = \Phi_k^*(z)\prod_{j=k}^n\left(1 - \frac{\alpha_jz\Phi_j}{\Phi_j^*}\right).
\end{equation}

Instead of working on this multiplicative level and dealing with the associated nonlinearity, we will consider the logarithm of the polynomial process as in \cite{chhaibi:omc16}. \eqref{product-MG-structure} shows
\begin{equation}\label{linear-MG-structure}
\log \Phi_{n+1}^* = \sum_{j=k}^n \log \left(1-\frac{\alpha_j z \Phi_j}{\Phi_j^*} \right) + \log \Phi_k^* = \sum_{j=k}^n \log\left(1-\alpha_je^{i\gamma_j(\theta)}\right) + \log \Phi_k^*.
\end{equation}

Taking the conditional expectation of \eqref{linear-MG-structure} we have
$$\mathbb{E} \left[ \log \Phi_{n+1}^* | \mathcal{F}_k\right] = \sum_{j=k}^n\mathbb{E} \left[ \log\left( 1-\alpha_je^{i\gamma_j(\theta)} \right) \bigg| \mathcal{F}_k\right] + \mathbb{E}\left[\log \Phi_k^*\bigg| \mathcal{F}_k\right]$$
$$= \sum_{j=k}^n\mathbb{E} \left[ \sum_{m=1}^\infty -\frac{(\alpha_j e^{i\gamma_j})^m}{m} \bigg| \mathcal{F}_k\right] + \log \Phi_k^* = \sum_{j=k}^n \sum_{m=1}^\infty \mathbb{E} \left[ -\frac{(\alpha_j e^{i\gamma_j})^m}{m} \bigg| \mathcal{F}_k\right] + \log \Phi_k^* $$
$$= \log \Phi_k^* + O\left(\sum_{s=0}^\infty |\alpha_s|^2\right). $$

The last equality follows by noting $e^{i\gamma_j} \in \mathcal{F}_{j}$ but $\alpha_j \perp \mathcal{F}_{j}$, and using the tower property of conditional expectation. To recover the martingale structure, we reduce to working with the first term of the Taylor expansion.

\begin{lemma}\label{reduction_to_sum}
	Let $G(k) = 2^{2^k}$. To prove Theorem \ref{polynomial-SZ}, it is sufficient to prove that in the same setup, for $\{\theta_j\}_{j=1}^{G(k+1)}$ the $G(k+1)$-st roots of unity and $\{\lambda_k\}$ some summable sequence, the sequence of events
	$$E_k \coloneqq \bigcup_{j=1}^{G(k+1)} \left\{ \max_{G(k) \leq s \leq G(k+1)} \left|\sum_{m=G(k)}^s \alpha_m e^{i \gamma_m(\theta_j)}\right| \geq \lambda_k\right\}$$ occurs only finitely many times with probability 1.
\end{lemma}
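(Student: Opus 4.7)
The plan is to reduce the multiplicative problem $\sup_n\|\Phi_n^*\|_\infty<\infty$ first to a linear partial-sum problem for $T_n(\theta) := \sum_{j=0}^{n-1}\alpha_j e^{i\gamma_j(\theta)}$, next to a problem of controlling block partial sums on the double-exponential scale, and finally to the discrete problem on the stated grid via a Bernstein-type sampling estimate.

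First, I would apply \eqref{linear-MG-structure} with $k=0$ (noting $\log\Phi_0^*\equiv 0$) and expand
$$\log\Phi_n^*(e^{i\theta}) = -\sum_{j=0}^{n-1}\alpha_j e^{i\gamma_j(\theta)} - \sum_{j=0}^{n-1}\sum_{m\ge 2}\frac{(\alpha_j e^{i\gamma_j(\theta)})^m}{m}.$$
The hypothesis $\sum_n \sqrt{R_n}/(n\sqrt{\log n})<\infty$ implies $\sum a_j^2<\infty$, so $a_j\to 0$ and for all but finitely many $j$ one has $|\alpha_j|\le 1/2$; the quadratic tail is then absolutely bounded by a constant multiple of $\sum_j a_j^2$ plus a deterministic contribution from the finitely many small $j$, uniformly in $n$ and $\theta$. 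Consequently $\log\Phi_n^*(e^{i\theta}) = -T_n(\theta) + O(1)$, and it suffices to bound $\sup_{n,\theta}|T_n(\theta)|$ almost surely.

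Second, I decompose the index range along the blocks $I_k=[G(k),G(k+1)]$. For $n\in I_k$ one has $T_n(\theta)=T_{G(k)}(\theta)+P_{n-1}^{(k)}(\theta)$ with
$$P_s^{(k)}(\theta) := \sum_{m=G(k)}^{s}\alpha_m e^{i\gamma_m(\theta)}, \qquad s\in I_k.$$
If one can arrange $\sup_\theta \max_{s\in I_k}|P_s^{(k)}(\theta)|\lesssim\lambda_k$ for all but finitely many $k$, the summability of $\{\lambda_k\}$ gives $\sup_{n,\theta}|T_n(\theta)|<\infty$ by telescoping across blocks. The inner $\max_s$, rather than only the endpoint $s=G(k+1)$, is exactly what is needed because the original $\sup_n$ ranges over all $n$, including values inside a block.

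Third, and this is the technical heart of the reduction, I would pass from $\sup_\theta$ to the discrete maximum over the $G(k+1)$-th roots of unity $\{\theta_j\}$. For each fixed $s\in I_k$, $P_s^{(k)}(\theta)$ is a smooth function of $\theta$ whose oscillation between adjacent grid points of spacing $2\pi/G(k+1)$ can be estimated by a Bernstein-type derivative bound: the derivative of $e^{i\gamma_m(\theta)} = e^{i\theta}\Phi_m(e^{i\theta})/\Phi_m^*(e^{i\theta})$ is controlled in terms of $m\le G(k+1)$ and of $\Phi_m,\Phi_m^*$ on $\mathbb{T}$, which by Szeg\H{o}'s condition are bounded and bounded away from zero uniformly in $n$. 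Combined with the smallness of $|\alpha_m|$ for $m\ge G(k)$ (so that $\sum_{m\in I_k}|\alpha_m|^2\le R_{G(k)}\to 0$) and a union bound over $s\in I_k$, this yields $\sup_\theta \max_s |P_s^{(k)}(\theta)| \lesssim \max_{j,s}|P_s^{(k)}(\theta_j)| + \text{(absorbable tail)}$. The main obstacle is precisely this sampling step: calibrating the derivative bound so that $G(k+1)$ equispaced points suffice to control the effective bandwidth of $P_s^{(k)}$, and choosing $\lambda_k$ both summable and large enough to absorb the chaining error. Granting the discretization, on the complement of $E_k$ for $k\ge k_0$ we have $\sup_\theta\max_s |P_s^{(k)}(\theta)|\lesssim \lambda_k$, and a Borel-Cantelli-type application of the assumption that $E_k$ occurs only finitely often, combined with Steps 1 and 2, produces the desired uniform bound on $\|\Phi_n^*\|_\infty$.
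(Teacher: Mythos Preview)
Your linearization (Step 1) and block decomposition (Step 2) match the paper exactly. The divergence is in Step 3, and there the approach has a genuine gap.

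You attempt the sampling/discretization on the function $P_s^{(k)}(\theta)=\sum_{m=G(k)}^s \alpha_m e^{i\gamma_m(\theta)}$ itself, bounding its $\theta$-derivative via $\gamma_m'$ and the claim that ``$\Phi_m,\Phi_m^*$ on $\mathbb{T}$\dots by Szeg\H{o}'s condition are bounded and bounded away from zero uniformly in $n$.'' That claim is precisely the content of Theorem \ref{polynomial-SZ}; Szeg\H{o}'s condition gives $H^2$-type convergence of $\Phi_n^*$ but \emph{not} uniform $L^\infty$ bounds (if it did, Steklov's conjecture would be trivially true under $\ell^2$ Verblunsky parameters, and the counterexamples of Rahmanov and Aptekarev--Denisov--Tulyakov show it is not). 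So the derivative bound you invoke is circular. Moreover, even granting $\gamma_m'(\theta)\asymp m$, the mean-value estimate gives oscillation $\lesssim G(k+1)^{-1}\sum_{m\le G(k+1)} m|\alpha_m|$, which is not controlled by $R_{G(k)}$ alone.

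The paper avoids this by applying Bernstein to a different object. It first bounds $|\log\Phi_s^*(e^{i\theta_j})|$ at the grid points using the assumption on $E_k$ together with the same Taylor remainder estimate you used; this is a statement only about the discrete values, so no derivative of $P_s^{(k)}$ is needed. Exponentiating gives $|\Phi_s^*(e^{i\theta_j})|\le C$ at the $G(k+1)$ grid points. Now $\Phi_s^*$ is an honest polynomial of degree $s\le G(k+1)$, so the classical Bernstein inequality (applied to a polynomial controlled on a grid of comparable cardinality) yields $\|\Phi_s^*\|_\infty\le C$. In short: discretize the \emph{polynomial}, not the log-linearized sum. This sidesteps the Pr\"ufer-phase derivative issue entirely and is what makes the reduction non-circular.
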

\begin{proof}
	Let $\left\{\alpha_n\right\} \in \Omega_0$ where $\mathbb{P}\left[\Omega_0\right]=1, \Omega_0 = \left\{ \omega \in \Omega: E_k \text{ occurs only finitely many times}\right\}$. Let $l$ be large enough so that $|\alpha_m| \leq \frac{1}{2}$ for $m \geq l$, and let $r \geq l$. Then 
$$|\log \Phi_r^*| \leq \left|\sum_{j=l}^r \log \left(1-\alpha_je^{i\gamma_j}\right)\right|+|\log \Phi_l^*| = \left|\sum_{j=l}^r\sum_{m=1}^\infty -\frac{(\alpha_j e^{i\gamma_j})^m}{m}\right|+|\log \Phi_l^*|$$
$$
 \leq \left| \sum_{j=l}^r \alpha_je^{i\gamma_j}\right| + C\sum_{j=l}^{r} |\alpha_m|^2 + |\log \Phi_l^*|.$$

 So for $s \geq G(k)$, $k \gg 1$ and any $j \in \{1, \dots, G(k+1)\}$,
\begin{equation}\label{inductive_sum}
\left|\log \Phi_s^*(e^{i\theta_j})\right| \leq \left|\sum_{m=G(k)}^s \alpha_me^{i\gamma_m(\theta_j)}\right|+O\left(\sum_{m=G(k)}^s |\alpha_m|^2\right) + \log|\Phi_{G(k)}^*|.
\end{equation}
Since $\left\{\alpha_n \right\} \in \Omega_0$ \eqref{inductive_sum} shows, for sufficiently large  $k$,
\begin{equation}  \label{G_k_control} \max_{G(k) \leq s \leq G(k+1)} \left|\log \Phi_s^*(\theta_j)\right| \leq \sum_{j \leq k} \lambda_j + \sum_{j \leq k} C\left(\sum_{m=G(j)}^{G(j+1)} |\alpha_m|^2\right) + C \leq C\end{equation} 
where $C$ is a random constant, and the last inequality uses the summability of $\{\lambda_k\}$ and $\{|\alpha_m|^2\}$.

Therefore  $$\max_{G(k) \leq s \leq G(k+1)} |\Phi_s^*(\theta_j)| \leq C$$ uniformly where $\theta_j$ ranges over the $G(k+1)$-st roots of unity. Since we have controlled the size of polynomials of degree at most $G(k+1)$ at points which are $O(G(k+1)^{-1})$-spaced, Bernstein's inequality implies 
$$\max_{G(k) \leq s \leq G(k+1)} \|\Phi_s^*\|_\infty \leq C.$$
Therefore
$$\sup_n \|\Phi_n^*\|_\infty \leq C.$$
\end{proof}

Controlling the uniform norm of $\left|\sum_{j=1}^\infty \alpha_je^{i\gamma_j(\theta)}\right|$ is almost exactly the same problem encountered by Salem and Zygmund in \cite{salem:spt54}, but with a particular form of dependence between terms. The next Lemma dispenses with this dependence.

Define
$$A_k(\omega, \theta) \coloneqq \sum_{j=G(k)}^{G(k+1)} \alpha_je^{i\gamma_j(\theta)}$$
and
$$B_k(\omega) \coloneqq \sum_{j=G(k)}^{G(k+1)} \alpha_j.$$

\begin{lemma}\label{no_more_dependence}
For $p \in \mathbb{Z}_+$ and fixed $\theta \in [0, 2\pi)$,
we have
$$\mathbb{E}\left[|A_k(\omega, \theta)|^{p}\right] \leq \mathbb{E} \left[ |B_k(\omega)|^{p}\right].$$
\end{lemma}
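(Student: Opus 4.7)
The plan is to identify, conditional on $\mathcal{F}_{G(k)}$, the joint distribution of the summands of $A_k$ with that of the summands of $B_k$; this will in fact yield the equality $\mathbb{E}[|A_k|^p] = \mathbb{E}[|B_k|^p]$ and is therefore stronger than the stated inequality. The driving observation is that, for each $j$, the phase $\gamma_j(\theta)$ is $\mathcal{F}_j$-measurable while $\omega_j$ is independent of $\mathcal{F}_j$, so the assumed rotational symmetry of $\omega_j$ forces $Y_j := \omega_j e^{i\gamma_j(\theta)}$ to have the same conditional law given $\mathcal{F}_j$ as $\omega_j$ has unconditionally.

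Writing $A_k = \sum_{j=G(k)}^{G(k+1)} a_j Y_j$, I would establish by induction on $j$ that, conditional on $\mathcal{F}_{G(k)}$, the variables $Y_{G(k)},\dots,Y_{G(k+1)}$ are independent, with $Y_j$ distributed as $\omega_j$. The key measurability fact is that, for indices with $a_m \ne 0$, $\omega_m = \alpha_m/a_m$ lies in $\mathcal{F}_{m+1}$ and $\gamma_m(\theta)$ lies in $\mathcal{F}_m$, so $Y_m \in \mathcal{F}_{m+1}$ and
$$\sigma(\mathcal{F}_{G(k)}, Y_{G(k)}, \dots, Y_{j-1}) \subseteq \mathcal{F}_j$$
for each $j$ in the range; indices with $a_m = 0$ contribute zero to both $A_k$ and $B_k$ and may be excluded. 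By the rotational-symmetry remark, the conditional law of $Y_j$ given $\mathcal{F}_j$ equals the law of $\omega_j$, and then the tower property applied to the smaller sigma-algebra displayed above delivers both the required conditional independence of $Y_j$ from $Y_{G(k)}, \dots, Y_{j-1}$ and the correct marginal.

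With the claim in hand, $A_k$ and $B_k$ are sums with identical deterministic coefficients against families of variables with matching joint law (conditional on $\mathcal{F}_{G(k)}$ for $A_k$, unconditionally for $B_k$), so the conditional law of $A_k$ given $\mathcal{F}_{G(k)}$ coincides with the law of $B_k$. Taking absolute $p$-th moments and then expectation in $\mathcal{F}_{G(k)}$ yields the lemma. The only step requiring care is the inductive measurability bookkeeping above; there is no true analytic obstacle, which is precisely what allows the argument to produce equality rather than strict inequality.
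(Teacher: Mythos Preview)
Your proposal is correct and follows essentially the same approach as the paper: both observe that, by rotational symmetry of the $\omega_j$ and the $\mathcal{F}_j$-measurability of $\gamma_j(\theta)$, the laws of $A_k$ and $B_k$ coincide, so one actually has equality of moments. The paper states this as a one-line fact, while you supply the careful inductive measurability argument that justifies it.
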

\begin{proof}
One may show this by expanding both quantities and using the conditional independence. But in fact this Lemma is immediate, since the laws of $A_k$ and $B_k$ are identical under the assumption that the $\alpha_j$ are symmetrically distributed. 
\end{proof}

{\bf Remark.} Lemmas \ref{reduction_to_sum} and \ref{no_more_dependence} indicate that we can use the approach of Salem and Zygmund with very few modifications to prove Theorem \ref{polynomial-SZ}.

We haven't yet utilized the martingale nature of the sum
$$\sum_{m=l}^k \alpha_me^{i\gamma_m(\theta_j)}$$
but it does play an important role, in extending the estimates we obtain from particular lattice points $G(k)$ to the entire sequence of positive integers. The tool we use for this purpose is the Doob martingale inequality. Recall that a submartingale is defined similarly to a martingale; a sequence of integrable random variables $\{X_n\}$ is a discrete submartingale with respect to a filtration $\left\{\mathcal{G}_k\right\}$ on a probability space if $\{X_n\}$ is adapted to the filtration and satisfies the inequality
$$\mathbb{E}\left[ X_n | \mathcal{G}_k \right] \geq X_{n-1}.$$

\begin{lemma}[Doob]\label{doob}
	Let $\{X_t\}$ be a nonnegative submartingale in discrete time. Then for any $C>0$,
	$$\mathbb{P} \left[ \max_{0 \leq t  \leq T} X_t \geq C \right] \leq \frac{\mathbb{E}[X_T]}{C}.$$
\end{lemma}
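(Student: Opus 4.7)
The plan is to prove Doob's inequality by the standard first-passage decomposition. Define the disjoint events
$$A_t = \{X_0 < C, \ldots, X_{t-1} < C, \, X_t \geq C\}, \qquad t = 0, 1, \ldots, T,$$
each of which lies in $\mathcal{G}_t$ since each $X_s$ is adapted. By construction, the union $A = \bigcup_{t=0}^T A_t$ is exactly the event $\{\max_{0 \leq t \leq T} X_t \geq C\}$ whose probability we wish to bound.

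Next I would exploit the submartingale property conditionally on $\mathcal{G}_t$. On $A_t$ we have $X_t \geq C$, and since $A_t \in \mathcal{G}_t$,
$$\mathbb{E}[X_T \mathbf{1}_{A_t}] = \mathbb{E}\bigl[\mathbb{E}[X_T \mid \mathcal{G}_t]\, \mathbf{1}_{A_t}\bigr] \geq \mathbb{E}[X_t \mathbf{1}_{A_t}] \geq C\, \mathbb{P}(A_t),$$
where the submartingale hypothesis iterated from $t$ up to $T$ gives $\mathbb{E}[X_T \mid \mathcal{G}_t] \geq X_t$.

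To finish, I would sum over $t$ and use nonnegativity of $X_T$ to drop the integral on the complement of $A$:
$$\mathbb{E}[X_T] \geq \sum_{t=0}^T \mathbb{E}[X_T \mathbf{1}_{A_t}] \geq C \sum_{t=0}^T \mathbb{P}(A_t) = C\, \mathbb{P}(A),$$
which rearranges to the desired inequality.

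There is no real obstacle here beyond bookkeeping; the one point to double-check is that the iterated conditional expectation $\mathbb{E}[X_T \mid \mathcal{G}_t] \geq X_t$ is valid, which follows by induction on $T - t$ from the one-step submartingale hypothesis and the tower property. An equivalent formulation would introduce the stopping time $\tau = \inf\{t : X_t \geq C\} \wedge T$ and apply optional stopping for bounded stopping times to the submartingale $\{X_t\}$, giving $\mathbb{E}[X_T] \geq \mathbb{E}[X_\tau] \geq C\, \mathbb{P}(A)$; I would prefer the explicit decomposition above since it avoids invoking the optional stopping theorem as a black box.
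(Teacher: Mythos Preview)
Your proof is correct and is the standard first-passage decomposition argument for Doob's maximal inequality. The paper does not give its own proof of this lemma; it simply cites Durrett (Theorem~5.4.2). The draft material left after \verb|\end{document}| in the source does sketch a proof, and that sketch takes the stopping-time route you mention at the end: it first shows $\mathbb{E}X_0 \le \mathbb{E}X_N \le \mathbb{E}X_k$ for any stopping time $N$ bounded by $k$, then applies this with $N=\inf\{m:X_m\ge\lambda\}\wedge n$ and splits according to whether the threshold is crossed. So your explicit decomposition and the paper's (excised) optional-stopping argument are the two equivalent standard proofs, and you have in fact covered both.
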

A proof of this statement can be found in \cite{durrett:pte10}, where it is stated as Theorem 5.4.2.

\section{Proof of Theorem \ref{polynomial-SZ}}

\begin{proof}[Theorem \ref{polynomial-SZ}]
Recall we have set $G(j) = 2^{2^{j}}$.

By Lemma \ref{reduction_to_sum}, it suffices to show, for some summable sequence $\{\lambda_k\}$ and $\{\theta_j\}_{j=1}^{G(k+1)}$ the $G(k+1)$-st roots of unity, the sequence of events
$$E_k = \bigcup_{j=1}^{G(k+1)} \left\{ \max_{G(k) \leq s \leq G(k+1)} \left|\sum_{m=G(k)}^s \alpha_m e^{i \gamma_m(\theta_j)}\right| \geq \lambda_k\right\}$$
occurs only finitely often with probability 1.

First we reduce to a one-point estimate. Let
$$P_k \coloneqq \mathbb{P}\left[ \bigcup_{j=1}^{G(k+1)}\left\{ \max_{G(k) \leq s \leq G(k+1)} \left|\sum_{m=G(k)}^s \alpha_m e^{i \gamma_m(\theta_j)}\right| \geq \lambda_k\right\}\right].$$ 
Clearly
$$P_k \leq \sum_{j=1}^{G(k+1)} \mathbb{P}\left\{\max_{G(k) \leq s \leq G(k+1)} \left|\sum_{m=G(k)}^s \alpha_m e^{i \gamma_m(\theta_j)}\right| \geq \lambda_k\right\}.$$
Letting $\theta_j$ be a point at which the probability is maximized, we have  
$$P_k \leq G(k+1) \mathbb{P} \left\{\max_{G(k) \leq s \leq G(k+1)} \left|\sum_{m=G(k)}^s \alpha_m e^{i \gamma_m(\theta_j)}\right| \geq \lambda_k\right\}.$$

We now apply the Doob martingale inequality, Lemma \ref{doob}. By the conditional Jensen inequality, $$M_{G(k)\to s}\coloneqq \exp\left(t\left|\sum_{m=G(k)}^s \alpha_m e^{i \gamma_m(\theta_j)}\right|\right)$$ is a submartingale for any $t>0$. So by Lemma \ref{doob},
$$\mathbb{P} \left\{\max_{G(k) \leq s \leq G(k+1)} \left|\sum_{m=G(k)}^s \alpha_m e^{i \gamma_m(\theta_j)}\right| \geq \lambda_k\right\} = \mathbb{P}\left\{ \max_{G(k) \leq s \leq G(k+1)} M_{G(k) \to s} \geq e^{t\lambda_k}\right\} \leq \frac{\mathbb{E}\left[ e^{tA_k(\omega, \theta_j)}\right]}{e^{t\lambda_k}}$$ for any $t>0$, where as before
$$A_k(\omega, \theta) = \sum_{j=G(k)}^{G(k+1)} \alpha_je^{i\gamma_j(\theta)}.$$

So we wish to control
$$S_k \coloneqq \mathbb{E}\left[e^{tA_k}\right].$$
By Lemma \ref{no_more_dependence}, all moments of the random variable $A_k(\omega, \theta)$ for fixed $\theta$ are controlled by those of the random variable
$$B_k = \sum_{j=G(k)}^{G(k+1)} \alpha_j.$$
$B_k$ is sub-Gaussian with variance proxy at most $R_{G(k)} = \sum_{j \geq G(k)} |\alpha_j|^2$, as the sum of independent sub-Gaussian random variables. Therefore $tA_k$ is sub-Gaussian with variance proxy at most $t^2R_{G(k)}$. So
$$S_k \leq \exp\left(\frac{t^2R_{G(k)}}{2}\right)$$
and
$$P_k \leq G(k+1) \exp\left(\frac{t^2R_{G(k)}}{2}-t\lambda_k\right).$$
Minimizing the exponent in $t$, we find $t = \frac{\lambda_k}{R_{G(k)}}$ and we have
$$P_k \leq G(k+1) \exp \left(- \frac{\lambda_k^2}{2R_{G(k)}}\right).$$

Now we choose $\lambda_k$. We wish to simultaneously have
$$\sum_k P_k < \infty \quad \quad \quad \sum_k \lambda_k < \infty$$
We take
$$\lambda_k = 3\sqrt{\log(G(k+1))R_{G(k)}}.$$
Then
$$P_k \leq G(k+1)\exp \left(- \frac{\lambda_k^2}{2R_{G(k)}}\right) \leq \exp\left(\log G(k+1)-\frac{3}{2}\log(G(k+1)) \right) =\exp\left(-2^{k}\log2\right)$$
so that 
$$\sum_k P_k <\infty.$$
To estimate $\sum_k \lambda_k$ we use our assumption on $R_n$. By Cauchy condensation, we have the implications
$$\sum_n \frac{\sqrt{R_n}}{n\sqrt{\log n}} < \infty \implies \sum_j \frac{\sqrt{R_{2^j}}}{j} <\infty \implies \sum_k \sqrt{2^jR_{2^{2^j}}}<\infty$$
which exactly says that 
$$\sum_k \lambda_k < \infty$$
by our choice of $G(k)$.

So by the Borel-Cantelli Lemma, the events $$\bigcup_{j=1}^{G(k+1)} \left\{ \max_{G(k) \leq s \leq G(k+1)} \left|\sum_{m=G(k)}^s \alpha_m e^{i \gamma_m(\theta_j)}\right| \geq \lambda_k\right\}$$ occur infinitely often with probability 0, and by Lemma \ref{reduction_to_sum} we have Theorem \ref{polynomial-SZ}.

\end{proof}

\section{Proof of Theorem \ref{sharpness}}

We begin with an important Lemma. 

\begin{lemma}
	\label{blowup}
	Let $\{\alpha_j\}\in \mathbb{D}^\infty$ and $\{\gamma_j(\theta)\}$ the associated Pr\"{u}fer phases. Assume that these satisfy the following conditions:
	\begin{enumerate}
	\item $\left|\gamma_j(\theta)-\gamma_j(0)-(j+1)\theta\right|\leq \frac{\pi}{12}$
	\item $\sum_n |\alpha_n| = \infty$
	\item For some $T \geq 12$, $\alpha_j=0$ unless $j=T^k$ for $k \geq 0$.
	\end{enumerate}
	Then
	$$\lim_{j \to \infty} \sup_{\theta \in [0,2\pi)}  \left| \sum_{n=0}^j \alpha_n e^{i\gamma_n(\theta)}\right| = \infty.$$
\end{lemma}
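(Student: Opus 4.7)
The plan is to construct a single $\theta^*\in[0,2\pi)$ along which the partial sums diverge, by a nested-interval argument in the spirit of Riesz-product lower bounds for lacunary series. Only indices $n_k \coloneqq T^k$ contribute, and condition (1) linearizes the direction of each contribution: writing $\alpha_{n_k}=|\alpha_{n_k}|e^{i\beta_k}$,
$$\alpha_{n_k}e^{i\gamma_{n_k}(\theta)} = |\alpha_{n_k}|\exp\bigl(i\beta_k + i\gamma_{n_k}(0) + i(n_k+1)\theta + i\delta_k(\theta)\bigr),\qquad |\delta_k(\theta)|\le \pi/12.$$
For each $k$ I would introduce the ``good set''
$$G_k \coloneqq \bigl\{\theta : \Re\bigl(\alpha_{n_k}e^{i\gamma_{n_k}(\theta)}\bigr) \ge |\alpha_{n_k}|/2\bigr\}.$$
Membership in $G_k$ is ensured by $|\gamma_{n_k}(\theta)+\beta_k|\le \pi/3\pmod{2\pi}$; after absorbing the $\pi/12$ angular slack it suffices that $(n_k+1)\theta + \gamma_{n_k}(0)+\beta_k \in [-\pi/4,\pi/4]\pmod{2\pi}$. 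Hence $G_k$ contains a periodic family of closed intervals of length $L_k \coloneqq \pi/[2(n_k+1)]$ spaced with period $P_k \coloneqq 2\pi/(n_k+1)$, of ratio $P_k/L_k=4$ independent of $k$.

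Next I would build closed nested intervals $I_0\supset I_1\supset\cdots$ with $I_k\subseteq G_k$ and $|I_k|=L_k$, inductively. Pick $I_0$ to be any component of $G_0$. Given $I_{k-1}$, a full component of $G_k$ is contained in $I_{k-1}$ as soon as $|I_{k-1}|=L_{k-1}\ge P_k+L_k$, i.e.\ $n_k+1\ge 5(n_{k-1}+1)$. Because $n_k=Tn_{k-1}$ with $T\ge 12$, this reduces at $k\ge 2$ to $(T-5)n_{k-1}\ge 4$ and at $k=1$ to $T\ge 9$, both comfortably satisfied; the hypothesis $T\ge 12$ is calibrated so that this step succeeds at every $k$. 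Setting $I_k$ to be such a component and invoking the nested-interval theorem, pick $\theta^*\in\bigcap_k I_k$, so $\theta^*\in G_k$ for every $k\ge 0$.

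Finally, with $K(j)\coloneqq \max\{k:n_k\le j\}$ and using $\alpha_n=0$ for $n\notin\{n_k\}$,
$$\Re\sum_{n=0}^{j}\alpha_n e^{i\gamma_n(\theta^*)} = \sum_{k=0}^{K(j)}\Re\bigl(\alpha_{n_k}e^{i\gamma_{n_k}(\theta^*)}\bigr) \ge \frac{1}{2}\sum_{k=0}^{K(j)}|\alpha_{n_k}| = \frac{1}{2}\sum_{n=0}^{j}|\alpha_n|,$$
which tends to $\infty$ as $j\to\infty$ by condition (2). Since $\sup_{\theta}|\sum_{n\le j}\alpha_n e^{i\gamma_n(\theta)}|\ge |\sum_{n\le j}\alpha_n e^{i\gamma_n(\theta^*)}|$, the conclusion follows. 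The only delicate point is the bookkeeping in step two: confirming that $T\ge 12$ really does leave room to fit an entire component of $G_k$ strictly inside $I_{k-1}$ uniformly in $k$ (including the boundary case $k=1$ where $n_0=1$ is smallest), given the $\pi/12$ angular tolerance and the additive $+1$'s in $n_k+1$. Everything else is essentially a one-parameter lacunary construction with the correct constants in hand.
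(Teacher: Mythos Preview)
Your argument is correct and follows essentially the same strategy as the paper's proof: define the good sets $G_k=\{\theta:\Re(\alpha_{T^k}e^{i\gamma_{T^k}(\theta)})\ge |\alpha_{T^k}|/2\}$ (the paper's $\Lambda_k$), build nested intervals $I_k\subseteq G_k$ using the lacunarity $T\ge 12$, and extract $\theta^*\in\bigcap_k I_k$ on which the real part of the sum diverges. The only difference is cosmetic: the paper verifies the inductive step by arguing that the Pr\"ufer phase winds fully around $\mathbb{T}$ as $\theta$ traverses $I_{N-1}$, whereas you work directly with the periodic family of intervals coming from the linearized phase $(n_k+1)\theta+\text{const}$; your bookkeeping is in fact slightly cleaner and shows $T\ge 9$ already suffices.
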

\begin{proof}
	Let 
	$$\Lambda_j=\left\{\theta \in [0, 2\pi): |\alpha_{T^j}| \leq 2\Re\left(\alpha_{T^j}e^{i\gamma_{T^j}(\theta)}\right)\right\}.$$
	Notice it suffices to show $\bigcap_{j \geq 0} \Lambda_j \neq \emptyset$. We will in fact show by induction
	\begin{equation}\label{ind_hyp} \bigcap_{j=0}^N \Lambda_j \text{ contains an interval of measure } \frac{2\pi+\frac{\pi}{6}}{T^{N+1}+1}.
	\end{equation}
	For $N=0$, condition (1) will yield \eqref{ind_hyp} but in fact it is not necessary. Since $\Phi_0=\Phi_0^*=1$, $\gamma_0(\theta) = \theta$ and $\Lambda_0$ contains an interval of measure $\frac{\pi}{3}\geq \frac{2\pi+\frac{\pi}{6}}{T+1}$.
	
	Assume \eqref{ind_hyp} holds for $N-1$. Denote by $\theta_1 \in [0, 2\pi)$ the argument of $\alpha_{T^N}$, that is,
	$$\alpha_{T^N} = |\alpha_{T^N}|e^{i\theta_1}.$$
	By assumption (1) along with the inductive hypothesis and continuity of the Pr\"{u}fer phase, $e^{i\gamma_{T^N}(\theta)+\theta_1}$ must run through $\mathbb{T}$ as $\theta$ runs through some interval contained in $\cap_{j=0}^{N-1} \Lambda_j$. Therefore there is an interval $I_N \subseteq \bigcap_{j=0}^{N-1} \Lambda_j$ so that 
	$$J_N = \{e^{i\gamma_{T^N}(\theta)+\theta_1}: \theta \in I_N\} \subseteq \left[e^{-i\pi/3},e^{i\pi/3}\right] \text{ and } |J_N| \geq \frac{\pi}{3}.$$
	
	Since $\gamma_{T^N}(\theta) = (T^N+1)\theta + C + f(\theta)$ for $|f(\theta)| \leq \frac{\pi}{12}$ by condition (1), 
	$$|I_N| \geq \frac{\pi}{6(T^N+1)}$$
	and by definition $$I_N \subseteq \bigcap_{j=0}^N \Lambda_j.$$
	So if $T$ satisfies \begin{equation}\label{T-condition}
	\frac{2\pi + \frac{\pi}{6}}{T^{N+1}+1} \leq \frac{\pi}{6(T^N+1)}
	\end{equation}
	we have the Lemma.
	
	For $T>1$, the function $\frac{T^{N+1}+1}{T^N+1}$ is minimized at $N=0$, so if $T \geq 12$ then $\frac{T^{N+1}+1}{T^N+1}\geq \frac{13}{2}$ and $T$ satisfies \eqref{T-condition}.

\end{proof}

\begin{proof}[Proof (Theorem \ref{sharpness}).]
Let $\epsilon>0$ be a control parameter. Let $T \in \mathbb{Z}, T \geq 12$, and $\psi$ be a nonnegative function on $\mathbb{N}$. We let, for $n \geq 2$,
$$a_{T^n} = \frac{\epsilon}{n\psi(n)}$$
and all other $a_j = 0$. We assume $\psi(n) \to \infty$ monotonically with $n$, but
$$\sum_n \frac{1}{n\psi(n)} = \infty.$$
Let $\omega_n=e^{iu_n}$ where $\{u_n\}$ is a sequence of independent random variables uniform in $[0, 2\pi)$, and $\alpha_n=a_n\omega_n$.
If we assume that $\{a_n\}$ satisfies the conditions of Theorem \ref{sharpness}, it is sufficient to show
$$\sup_N  \sup_{\theta \in [0, 2\pi)}\left|\sum_{n=0}^N \alpha_{T^n}e^{i\gamma_{T^n}(\theta)}\right| = \infty$$
since $\{\alpha_j\} \in \ell^2(\mathbb{Z}_+)$ and $\log \Phi_n^* = \sum_{j=0}^{n-1} \alpha_je^{i\gamma_j(\theta)} + O\left(\sum_{j=0}^{n-1} |\alpha_j|^2\right)$.

As established in \cite{killip:esc09} and stated in Section 2.3 of \cite{chhaibi:omc16}, the Pr\"{u}fer phases satisfy the following recursion:
\begin{equation}
\label{prufer-recursion}
\gamma_0(\theta)=\theta,$$
$$\gamma_{m+1}(\theta)-\gamma_{m+1}(0) $$
$$= \gamma_m(\theta)-\gamma_m(0) + \theta -2\left( \Im \left(1-\alpha_me^{i\gamma_m(\theta)}\right)-\Im \log\left(1-\alpha_me^{i\gamma_m(0)}\right)\right).
\end{equation}

By \eqref{prufer-recursion}, 
$$\left|\gamma_m(\theta)-\gamma_m(0)-(m+1)\theta\right| \leq 2\left| \sum_{n \leq m-1} \sum_{k=1}^{\infty} \frac{(\alpha_n)^k}{k}(e^{ik(\gamma_n(\theta))}-e^{ik\gamma_n(0)})\right| $$
$$\leq O(\epsilon) + 2\left| \sum_{n=1}^m \alpha_n(e^{i\gamma_n(\theta)}-e^{i\gamma_n(0)})\right|.$$

Suppose there is a random constant $C(\omega)$ such that with probability 1,  \begin{equation}
\label{assumption_towards_contradiction}
\sup_{j \geq 1} \sup_{\theta \in [0, 2\pi)} \left|\sum_{n=1}^j \frac{\omega_n}{n\psi(n)}e^{i\gamma_{T^n}(\theta)}\right|\leq C(\omega).
\end{equation}
Suppose further that $C(\omega)$ is the minimal such constant.

Recall that the moments of $\sum_{n=1}^j \frac{\omega_n}{n\psi(n)}e^{i\gamma_{T^n}(0)}$ are controlled by those of $\sum_{n=1}^j \frac{\omega_n}{n\psi(n)}$ by symmetry of $\omega_n$. Then noting that $\left\{\frac{1}{n\psi(n)}\right\} \in \ell^2$ shows $\left| \sum_{n=1}^j \frac{\omega_n}{n\psi(n)}e^{i\gamma_{T^n}(0)}  \right| \leq C(\omega)$ with probability 1, and so our assumption implies $\left| \sum_{n=1}^j \alpha_{T^n}\left(e^{i\gamma_{T^n}(\theta)}-e^{i\gamma_{T^n}(0)}\right)\right| \leq \epsilon C(\omega)$ uniformly in $\theta$ and $j$ for $C(\omega) \in L^\infty(\Omega)$. 

Let $\Omega_0 = \cup_n \left\{ C(\omega) \leq n\right\}$, so that $\mathbb{P}\left(\Omega_0\right)=1$. Here we use our control over $\epsilon$. Since all estimates scale with $\epsilon$ we may assume that for any fixed $\epsilon_1>0$,
$$\Omega_1 = \left\{\omega \in \Omega: \sup_{j \geq 1} \sup_{\theta \in [0, 2\pi)} \left| \sum_{n=1}^j \alpha_{T^n}\left(e^{i\gamma_{T^n}(\theta)}-e^{i\gamma_{T^n}(0)}\right)\right| \leq \epsilon_1 \right\} \text{ satisfies } \mathbb{P}\left(\Omega_1\right) \geq \frac{1}{2}.$$
So we may take $\epsilon_0>0$ so that for $a_{T^n} = \frac{\epsilon_0}{n\psi(n)}$ and $\omega \ in \Omega_1$, $$\gamma_j(\theta) = \gamma_j(0) + (j+1)\theta + f(\theta), \quad |f(\theta)| \leq \frac{\pi}{12}.$$

But by Lemma \ref{blowup}, under this choice of $\{a_n\}$ and for $\omega \in \Omega_1$,
$$\sup_{j \geq 1} \sup_{\theta \in [0, 2\pi)} \left|\sum_{n=1}^j \alpha_ne^{i\gamma_n(\theta)}\right|=\infty.$$

This is a contradiction, and so with positive probability \begin{equation}
\label{conclusion}
\sup_{\theta \in [0, 2\pi)} \sup_{j \geq 0} \left|\sum_{n=1}^j \alpha_ne^{i\gamma_n(\theta)}\right|=\infty.
\end{equation}

\eqref{conclusion} is a tail event, so by Kolmogorov's 0-1 law we must in fact have \eqref{conclusion} with probability 1. Therefore almost surely
$$\sup_n \left\|\Phi_n^*\right\|_\infty = \infty.$$
Now we must ensure that the conditions of Theorem \ref{sharpness} are satisfied. The argument to follow can essentially be found in \cite{salem:spt54}, Remarks on Theorem (5.1.5).

$$R_{T^n} = \sum_{k=n}^{\infty} \frac{1}{k^2\psi(k)^2} \leq \sum_{k=n}^{\infty} \frac{1}{k^2\psi(n)^2} \lesssim \frac{1}{n\psi^2(n)}$$
as $\psi$ is monotonic, and so for any $\beta(n) \to \infty$ monotonically in $n$,
$$\sqrt{\frac{R_{T^k}}{k}}\frac{1}{\beta(k)} \lesssim \frac{1}{k\psi(k)\beta(k)}.$$
No matter how slowly $\beta$ increases, there is $\psi(k)$ so that
$$\sum_k \frac{1}{k\psi(k)} = \infty, \quad \sum_k \frac{1}{k\psi(k)\beta(k)} < \infty.$$
Letting $\nu(T^k) = \beta(k)$, generalized Cauchy condensation implies
$$\sum_n \frac{\sqrt{R_n}}{\nu(n)n\sqrt{\log n}} < \infty \iff \sum_k \sqrt{\frac{R_{T^k}}{k}} \frac{1}{\beta(k)}<\infty$$
so the condition in Theorem \ref{polynomial-SZ} is the best possible of its kind.
\end{proof}

\textbf{Acknowledgement.} 

The research of the author was supported by the RTG grant NSF-DMS-1147523

\bibliographystyle{amsplain}

\begin{thebibliography}{10}
	
	\bibitem{ambroladze:opr92}
	M.~U. Ambroladze, \emph{On the possible rate of growth of polynomials
		orthogonal with a continuous positive weight}, Math.
	USSR Sb. \textbf{72} (1992), no.~2, 311.
	
	\bibitem{aptekarev:ops14}
	A.~Aptekarev S.~Denisov and D.~Tulyakov, \emph{On a problem by {S}teklov},
	JAMS \textbf{29} (2016), no.~4, 1117--1165.
	
	\bibitem{chhaibi:omc16}
	R.~{Chhaibi}, T.~{Madaule}, and J.~{Najnudel}, \emph{{On the maximum of the C$\beta$E field}}, Preprint arXiv:1607.00243.
	
	\bibitem{denisov:tgp16}
	S. Denisov, \emph{The growth of polynomials orthogonal on the unit circle
		with respect to a weight w that satisfies w,1/w in ${L}^\infty(\mathbb{T})$},
	submitted (2016).
	
	\bibitem{denisov:opc16}
	S. Denisov and K. Rush, \emph{Orthogonal polynomials on the circle for
		the weight w what satisfies w, 1/w in BMO($\mathbb{T}$)}, To appear in Constr.
	Approx. (2016).
	
	\bibitem{durrett:pte10}
	R.~Durrett, \emph{Probability: Theory and examples}, Cambridge Series in
	Statistical and Probabilistic Mathematics, Cambridge, UK, 2010.
	
	\bibitem{geronimus:poc60}
	Ya.~L. Geronimus, \emph{Polynomials orthogonal on the circle and on the
		interval}, International Series of Monographs on Pure and Applied
	Mathematics, vol.~18, Pergamon Press, New York-London-Paris, 1960.
	
	\bibitem{geronimus:ocv62}
	Ya.~L. Geronimus, \emph{On a conjecture of {V.A.} {S}teklov}, Dokl. Akad. Nauk
	SSSR, vol. 142, 1962, pp.~507--509.
	
	\bibitem{geronimus:rbo63}
	\bysame, \emph{The relation between the order of growth of orthonormal
		polynomials and their weight function}, Mat. Sb. \textbf{103}
	(1963), no.~1, 65--79.
	
	\bibitem{geronimus:seo77}
	\bysame, \emph{Some estimates for orthogonal polynomials and on the {S}teklov
		problem}, Dokl. Akad. Nauk SSSR \textbf{236} (1977), no.~1, 14--17.
	
	\bibitem{golinskii:tvs74}
	B.L. Golinskii, \emph{The {V.A.} {S}teklov problem in the theory of orthogonal
		polynomials}, Mat. Zametki
	\textbf{15} (1974), 21--32.
	
	\bibitem{killip:esc09}
	R.~Killip and M.~Stoiciu, \emph{Eigenvalue statistics for {CMV} matrices: From
		{P}oisson to clock via random matrix ensembles}, Duke Math. J. \textbf{146}
	(2009), no.~3, 361--399.
	
	\bibitem{kiselev:mpe98}
	A.~Kiselev, Y.~Last, and B.~Simon, \emph{Modified {P}r{\"u}fer and {EFGP}
		transforms and the spectral analysis of one-dimensional {S}chr{\"o}dinger
		operators}, Comm. Math. Phys. \textbf{194} (1998),
	no.~1, 1--45.
	
	\bibitem{rahmanov:osc80}
	E.A. Rahmanov, \emph{On {S}teklov's conjecture in the theory of orthogonal
		polynomials}, Math. USSR Sb. \textbf{36} (1980), 549--575.
	
	\bibitem{rahmanov:ego82}
	\bysame, \emph{Estimates of the growth of orthogonal polynomials whose weight
		is bounded away from zero}, Math. USSR Sb. \textbf{42} (1982), 237--263.
	
	\bibitem{salem:spt54}
	R.~Salem and A.~Zygmund, \emph{Some properties of trigonometric series whose
		terms have random signs}, Acta. Math. \textbf{91} (1954), 245--301.
	
	\bibitem{steklov:ums21}
	V.A. Steklov, \emph{Une methode de la solution du probleme de development des
		fonctions en series de polynomes de Tchebysheff independante de la theorie de
		fermeture}, Izv. Rus. Ac. Sci, \textbf{91} (1921), 281--302, 303--326.
	
	\bibitem{suetin:vsp77}
	P.K. Suetin, \emph{{V.A.} {S}teklov's problem in the theory of orthogonal
		polynomials}, Itogi Nauki i Tech. Mat. Anal.
	\textbf{15} (1977), 5--82.
	
	\bibitem{verblunsky:oph36}
	S.~Verblunsky, \emph{On positive harmonic functions}, Proc. London
	Math. Soc. (2) \textbf{40} (1936), 290--320.
	
\end{thebibliography}

\end{document}